\theoremstyle{plain}
\newtheorem{thm}{Theorem}[section]
\newtheorem{lemma}[thm]{Lemma}
\newtheorem{prop}[thm]{Proposition}
\theoremstyle{definition}
\newtheorem{defn}[thm]{Definition}
\theoremstyle{remark}
\newcommand{\nc}{\newcommand}
\nc{\dmo}{\DeclareMathOperator}
\DeclareMathOperator{\Conf}{Conf}
\DeclareMathOperator{\Mod}{Mod}
\DeclareMathOperator{\SMod}{SMod}
\DeclareMathOperator{\Homeo}{Homeo}
\DeclareMathOperator{\SHomeo}{SHomeo}
\nc{\para}[1]{\medskip\noindent\textbf{#1.}}
\title{Non-realizability of some big mapping class groups}
\address{Department of Mathematics   \newline University of Maryland, College Park }
\email{chenlei@umd.edu }
\address{Department of Mathematics   \newline University of Oklohoma, Norman, OK, 73019 }
\email{he@ou.edu }
\author{Lei Chen and Yan Mary He}
\begin{document}
 \bibliographystyle{alpha}
\maketitle
\begin{abstract}
In this note, we prove that the compactly supported mapping class group of a surface containing a genus $3$ subsurface has no realization as a subgroup of the homeomorphism group. We also prove that for certain surfaces with order $6$ symmetries, their mapping class groups have no realization as a subgroup of the homeomorphism group. Examples of such surfaces include the plane minus a Cantor set and the sphere minus a Cantor set.
\end{abstract}

\section{introduction}
In this paper, we study the generalized Nielsen realization problem for mapping class groups of infinite type surfaces. Let $X$ be a surface. We say that $X$ is {\it finite type} if its fundamental group $\pi_1(X)$ is finitely generated and $X$ is {\it infinite type} if $\pi_1(X)$ is not finitely generated. The mapping class group  $\Mod(X):=\pi_0(\Homeo^+(X))$ of $X$ is the group of path-connected components of the orientation preserving homeomorphism group $\Homeo^+(X)$ of $X$. In particular, $\Mod(X)$ is called a {\it big} mapping class group if $X$ is an infinite type surface.

For any surface $X$, there is a natural forgetful homomorphism \[
	p_X:\Homeo^+(X)\to \Mod(X)
	\] sending an orientation preserving homeomorphism of $X$ to its mapping class. We say that $p_X$ has a {\it section} if there is a homormorphism $\rho : \Mod(X) \to \Homeo^+(X)$ such that the composition $p_X \circ \rho$ equals the identity map.
The generalized Nielsen realization problem asks if the forgetful map $p_X$
has a section or not. The importance of this problem in geometric topology is its relation to the existence of flat structures on the universal $X$-bundle. We will give more details in Section 2. In general, we expect that $p_X$ not to have sections if the fundamental group $\pi_1(X)$ is not abelian. 

For  finite type surfaces, Nielsen posed the realization question for finite subgroups of $\Mod(X)$ in 1943 and Kerckhoff \cite{Ker} showed that a lift always exists for finite subgroups of $\Mod(S_g)$,  where $S_g$ is the closed surface of genus $g \ge 2$. The first result on the Nielsen realization problem for the whole mapping class group is a theorem of Morita \cite{Mor} that when $g \ge 18$, there is no section for the projection $p_{S_g}$ to the group of orientation preserving $C^2$-diffeomorphisms. Then Markovic \cite{Mar} showed that $p_{S_g}$ does not have a section for $g\ge 6$ (Chen--Salter \cite{CS} later gave an elementary proof of the same result with the lowest possible genus bound). Franks--Handel \cite{FH}, Bestvina--Church--Suoto \cite{BCS} and Salter--Tshishiku \cite{ST} have also obtained the non-realization theorems for $C^1$-diffeomorphisms. Recently, Chen \cite{Chen} proved that braid group $B_n$ has no realization in the group of homeomorphisms for $n\ge6$ and Chen--Markovic \cite{CM} proved that the Torelli group has no realization in the group of area-preserving homeomorphisms. The current big open problem in this area is whether every finite index subgroup of the mapping class group has realization as homeomorphisms or not. We refer the readers to the survey paper of Mann--Tshishiku \cite{MT} for more history and previous results.

For {\it infinite type} surfaces, the finite subgroup realization has been established in a recent paper of Afton--Calegari--Chen--Lyman \cite{Lvzhou}. More precisely, they showed that any finite subgroup of the big mapping class group $\Mod(X)$ can be realized as a subgroup of homeomorphisms of $X$ where $X$ is an orientable infinite type surface with or without boundary. In this paper, we study the Nielsen realization problem for the {\it compactly supported} mapping class group $\Mod_c(X)$ where $X$ is any surface containing a genus 3 subsurface. Removing the compact support condition, we also study the Nielsen realization problem for the entire big mapping class group $\Mod(X)$ for certain surfaces $X$ which admit an order 6 symmetry.

\subsection{Statement of results}
Let $X$ be a surface. Recall that a homeomorphism of $X$ is {\it compactly supported} if it is the identity map outside of a compact subset of $X$. Let $\Homeo^+_c(X)$ be the group of orientation preserving compactly supported homeomorphisms of $X$. The compactly supported mapping class group $\Mod_c(X)$ of $X$ is the connected component group of $\Homeo^+_c(X)$. We refer the interested reader to the survey paper of Aramayona--Vlamis \cite[Chapter 2.4]{bigSurvey} for more details.

Our first theorem concerns realizability of $\Mod_c(X)$ where $X$ is a surface containing a genus $3$ subsurface.
\begin{thm}\label{main2}
	If $X$ is a surface which contains a genus $3$ subsurface, then the projection map
	\[
	p_S^c:\Homeo^+_c(X)\to \Mod_c(X)
	\]
	has no sections.
\end{thm}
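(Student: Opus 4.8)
The plan is to assume a section $\rho\colon \Mod_c(X)\to\Homeo^+_c(X)$ of $p_S^c$ exists and derive a contradiction with Chen's theorem \cite{Chen} that the braid group $B_n$ has no realization by homeomorphisms for $n\ge 6$. The bridge is the hyperelliptic Birman--Hilden correspondence, and the genus hypothesis enters precisely here: a genus $3$ piece produces a branched cover with $2\cdot 3+1=7$ branch points, and $7\ge 6$ is exactly Chen's range, whereas a genus $2$ piece would only yield $B_5$, which lies outside it. This is the reason $3$ is the relevant threshold.

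First I would fix an embedded subsurface $\Sigma\subset X$ homeomorphic to $S_{3,1}$, a genus $3$ surface with one boundary component. Extension by the identity outside $\Sigma$ gives an injection $\Mod(\Sigma,\partial\Sigma)\hookrightarrow\Mod_c(X)$, and inside $\Mod(\Sigma,\partial\Sigma)$ sits the symmetric mapping class group $\SMod(\Sigma,\partial\Sigma)$ of classes commuting with the hyperelliptic involution $J$ of the branched double cover $\Sigma\to D^2$ over $7$ marked points. By Birman--Hilden one has $\SMod(\Sigma,\partial\Sigma)\cong B_7$, and every element of this subgroup fixes $\partial\Sigma$ pointwise, so it genuinely lies in $\Mod_c(X)$. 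Since a section restricts to a section on any subgroup, $\rho$ restricts to a homomorphism $s\colon B_7\to\Homeo^+_c(X)$ lifting the inclusion $B_7\hookrightarrow\Mod_c(X)$. The goal is then to convert $s$ into an honest realization of $B_7=\Mod(D^2,7)$ by homeomorphisms of the disk fixing its boundary, which Chen's theorem forbids. For this I would first \emph{localize}, using that $B_7$ commutes with every mapping class supported in $X\setminus\Sigma$ together with a fragmentation/support argument for compactly supported homeomorphisms, so that the homeomorphisms $s(\beta)$ can be taken supported in a fixed neighborhood of $\Sigma$; this produces a homomorphism $B_7\to\Homeo^+(\Sigma,\partial\Sigma)$ lifting the natural map. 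One would then descend through the cover $\Sigma\to D^2$: making these homeomorphisms equivariant for a realization of the covering involution $J$ lets them descend to homeomorphisms of the quotient disk permuting the $7$ branch points, yielding the forbidden section.

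The main obstacle is exactly this descent. The hyperelliptic involution $J$ does not fix $\partial\Sigma$ pointwise --- indeed an orientation-preserving involution can never fix a circle pointwise --- so $J$ is not itself a compactly supported mapping class and cannot be produced directly from $\rho$; consequently the homeomorphisms $s(\beta)$ are only \emph{isotopic} to $J$-equivariant ones rather than equal to them, and there is no free way to realize $J$ by an honest involution while fixing $\partial\Sigma$. Overcoming this requires genuinely new input, and I expect it to be the heart of the argument: either one coherently (homomorphism-respectingly) deforms $s$ to land in $J$-equivariant homeomorphisms, or one imports Chen's dynamical obstruction --- point-pushing together with a rotation-number / Euler-class computation on the boundary circle $\partial\Sigma$ --- directly into the bordered surface $\Sigma$, thereby bypassing the need to realize $J$. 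This is also where the compactly supported (bordered) nature of the statement is essential, since it is precisely the boundary-fixing constraint that both enables the embedding of $B_7$ and creates the difficulty in descending.
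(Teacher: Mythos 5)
Your outline correctly identifies the endgame (Birman--Hilden plus Chen's non-realization theorem, with genus $3$ being the threshold because it yields $7\ge 6$ branch points), but both of the steps you describe as needing work are genuine gaps, and neither can be completed the way you suggest. The localization step fails as stated: knowing that $B_7$ commutes in $\Mod_c(X)$ with every class supported in $X\setminus\Sigma$ only tells you that the homeomorphisms $s(\beta)$ commute with the realizations $\rho(g)$, which are themselves completely uncontrolled homeomorphisms; this gives no information about the support of $s(\beta)$. Moreover you cannot ``take'' the $s(\beta)$ to be supported near $\Sigma$ by any fragmentation or isotopy adjustment, because modifying representatives element-by-element is exactly what the homomorphism property of a section forbids --- this difficulty is the whole content of realization problems. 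The paper's substitute for this step is Markovic's minimal decomposition theory: it applies the theory to the single homeomorphism $\rho_c(T_c)$ realizing the Dehn twist about the boundary $c$ of the genus $3$ subsurface $T$, and uses the canonicity (minimality) of the decomposition together with the fact that $\Mod(T)$ centralizes $T_c$ to produce a subsurface $T'$, homotopic to $T$, that is invariant under \emph{all} of $\rho_c(\Mod(T))$. This is a substantial imported tool, not a routine support argument.

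The descent obstruction you flag is real, but the paper resolves it by a route different from both of your suggestions: it never attempts to realize the hyperelliptic involution on the bordered surface at all. Instead, it collapses each acyclic component of the minimal decomposition of $\rho_c(T_c)$ inside $T'$ to a point; the quotient is a closed genus $3$ surface with one marked point, $\Mod(T)$ acts on it by homeomorphisms, and $T_c$ acts trivially. Since the boundary-capping sequence identifies $\Mod(T)/\langle T_c\rangle$ with $\Mod(S_{3,1})$ (marked point, no boundary), this produces a section of $p_{S_{3,1}}$. In the marked-point setting the hyperelliptic involution $\tau$ fixing the marked point \emph{is} an element of $\Mod(S_{3,1})$, so a section realizes it automatically as an honest order-$2$ homeomorphism, which by Ahlfors' trick is conjugate to the standard involution with $8$ fixed points; Birman--Hilden then descends the realized symmetric subgroup $\SMod(S_{3,1})$ to the quotient sphere and yields a section for $\Mod(S_{0,7,1})$, contradicting Chen's theorem. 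So the difficulty you correctly diagnose (no equivariant realization relative to a pointwise-fixed boundary circle) is sidestepped, not overcome: the boundary is destroyed by the collapse, and it is precisely Markovic's theory that makes this collapse equivariant for the whole group. Both of these ideas are missing from your proposal.
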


To prove the theorem, we use Markovic's minimal decomposition theory \cite{Mar}. Suppose on the contrary that there exists a section $\rho_c : \Mod_c(X) \to \Homeo_c^+(X)$. Then using Markovic's minimal decomposition theory, we find a subsurface of $X$ having genus $3$ and one boundary component on which we can realize the mapping class group $\Mod(S_{3,1})$ as a subgroup of $\Homeo^+(S_{3,1})$. Here $\Homeo^+(S_{3,1})$ is the group of orientation preserving homeomorphisms of a genus $3$ surface fixing a point and $\Mod(S_{3,1})$ is the connected component group of $\Homeo^+(S_{3,1})$. However, this is a contradiction to the following theorem.

\begin{thm}\label{genus3}
	The natural projection map $p_{S_{3,1}}:\Homeo^+(S_{3,1})\to \Mod(S_{3,1})$ has no sections.
\end{thm}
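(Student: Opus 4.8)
The plan is to argue by contradiction: suppose $\rho\colon \Mod(S_{3,1})\to\Homeo^+(S_{3,1})$ is a section, so that every mapping class fixing the marked point $p$ is realized by an actual homeomorphism of $S_3$ fixing $p$, compatibly with composition. I would first isolate the two subgroups that carry the relevant information. On one hand, by Nielsen realization for finite subgroups (Kerckhoff, and in the infinite-type setting Afton--Calegari--Chen--Lyman) every finite-order mapping class is already individually realizable, so no single torsion element yields an obstruction; the obstruction must be \emph{global}, arising from the way $\rho$ simultaneously realizes many elements. On the other hand, the Birman exact sequence
\[
1\to \pi_1(S_3,p)\to \Mod(S_{3,1})\to \Mod(S_3)\to 1
\]
exhibits the point-pushing subgroup $P\cong\pi_1(S_3)$, and $\rho|_P$ produces an action of $\pi_1(S_3)$ on $S_3$ fixing $p$. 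A guiding heuristic for why the case of genus $3$ with a marked point is delicate is the Euler-number computation for this subbundle: the point-pushing family has vertical Euler number $\pm\chi(S_3)$, which sits exactly at the Milnor--Wood bound. This indicates that no obstruction is visible from $P$ by itself, so the interaction of $P$ with the rest of $\Mod(S_{3,1})$ must be essential to any proof.

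The main step is to apply Markovic's minimal decomposition theory to the realized action $\rho(\Mod(S_{3,1}))$. Although each $\rho(f)$ is only a continuous homeomorphism, the decomposition theory attaches to every realized Dehn twist $\rho(T_c)$ a canonical invariant set tracking the curve $c$, and to every realized finite-order element a canonical fixed-point set, all equivariantly for the whole realized group. I would use these canonical invariants to pin down the positions of the realized curves relative to the fixed point $p$ and relative to one another. Concretely, I would fix an explicit generating configuration of simple closed curves on $S_3$ --- a chain filling the genus-$3$ surface together with a curve encircling $p$ --- and translate the standard relations they satisfy in $\Mod(S_{3,1})$, namely the braid and chain relations together with a lantern relation, into relations among the corresponding canonical invariant sets.

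The contradiction should then emerge from over-determination: the defining relations of $\Mod(S_{3,1})$ force the canonical invariant annuli and fixed sets to be nested and to intersect in mutually incompatible ways, while equivariance under the realized action forbids precisely such a configuration. Genus $3$ with a marked point is the smallest case in which the filling chain, an order-$6$ symmetry of the genus-$3$ surface, and the encircling curve coexist with enough relations to make this system inconsistent, which is what singles out this surface.

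I expect the main obstacle to be exactly the passage from a purely topological (non-smooth) realization to usable dynamical data: a continuous homeomorphism fixing $p$ has no derivative there and need not even extend to the real blow-up, so one cannot naively speak of rotation numbers or invariant circles at $p$. Overcoming this is precisely the role of Markovic's minimal decomposition, and the delicate point will be to verify that the decomposition is compatible with the marked point $p$ and with the point-pushing subgroup $P$, so that the canonical invariants interact with $p$ in the rigid manner the relations demand. Making this compatibility precise, rather than the relation-chasing that follows from it, is where I anticipate the real work to lie.
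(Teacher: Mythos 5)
There is a genuine gap in your proposal: the entire contradiction is deferred to an unspecified ``over-determination'' among canonical invariant sets attached to realized Dehn twists, and this step is never carried out, nor is there evidence it can be at this genus. Markovic's relation-chasing argument with minimal decompositions is exactly what proves non-realizability for \emph{closed} surfaces, but it needs room: his original proof requires $g\ge 6$, and lowering the genus (Chen--Salter) required a genuinely different, torsion-based argument. Your assertion that genus $3$ with a marked point ``is the smallest case'' in which a filling chain, an order-$6$ symmetry, and an encircling curve force inconsistency is an unsupported guess, not an argument; and the compatibility of the decomposition with the marked point and the point-pushing subgroup, which you correctly flag as the hard part, is precisely what you never establish. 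In the paper itself, Markovic's theory is used only for Theorem \ref{main2} (the compactly supported case), where it serves to \emph{reduce} to Theorem \ref{genus3}; it plays no role in the proof of Theorem \ref{genus3} itself.

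The paper's actual proof is short and concrete, and your opening heuristic that ``no single torsion element yields an obstruction'' points away from it. Let $\tau$ be the hyperelliptic involution of $S_3$ fixing the marked point $x$. If a section $\rho$ existed, then by Ahlfors' trick $\rho(\tau)$ would be conjugate to the standard involution, hence have exactly $2g+2=8$ fixed points, one of them $x$. The Birman--Hilden theory \cite{BH} gives the exact sequence $1\to \langle \tau\rangle\to \SMod(S_{3,1})\to \Mod(S_{0,7,1})\to 1$, and restricting $\rho$ to the symmetric mapping class group $\SMod(S_{3,1})$ and passing to the quotient by $\rho(\tau)$ produces a section of $p_{S_{0,7,1}}:\Homeo^+(S_{0,7,1})\to \Mod(S_{0,7,1})$. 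This contradicts Chen's theorem \cite{Chen} that $\Mod(S_{0,7,1})$ (a spherical braid-type mapping class group with $7\ge 6$ marked points plus one distinguished point) has no realization. So the obstruction is built around a single torsion element after all, used not in isolation but through the Birman--Hilden correspondence; and what singles out genus $3$ is purely numerical, namely that the hyperelliptic quotient is a sphere with $7+1$ marked points, enough for the braid-group non-realizability result to apply.
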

To prove this theorem, we use the Birman-Hilden theory and a theorem of the first author's that $\Mod(S_{0,7,1})$ has no realizations in $\Homeo^+(S_{0,7,1})$. Here $\Mod(S_{0,7,1})$ is the mapping class group of a surface of genus 0 fixing 2 sets of points, where one set contains 7 points and the other set contains one point.

If we remove the compact support condition in Theorem \ref{main2} and consider the Nielsen realization problem for the entire big mapping class group, it seems impossible that Markovic's theory can be directly applied to this non-compactly supported case. This is due to the fact that the shadowing property of local Anosov map fails to work if the realization of a mapping class does not have compact support. 

However, for some special infinite type surfaces, we do obtain non-realizability results for the entire big mapping class group.

\begin{thm}\label{cantor}
Let $X = \mathbb{R}^2 \setminus \mathcal{C}$ or $X = \mathbb{S}^2 \setminus \mathcal{C}$ where $\mathcal{C}$ is a set which can be arranged as in Figure \ref{fig1} so that Lemma \ref{lem_C} holds. Then the map $p_X:\Homeo^+(X)\to \Mod(X)$ has no sections.
\end{thm}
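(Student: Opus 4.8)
The plan is to assume, for contradiction, that a section $\rho\colon \Mod(X)\to\Homeo^+(X)$ exists, and to extract from it an honest finite-order homeomorphism whose dynamics are too rigid to accommodate the symmetry recorded in Figure~\ref{fig1}. In both the planar and spherical cases one should first pass to the sphere: since $\mathcal{C}$ is a compact, totally disconnected set realizing the space of ends, every homeomorphism of $X$ extends uniquely to a homeomorphism of $\mathbb{S}^2$ preserving $\mathcal{C}$ setwise (in the planar case one also adjoins the point at infinity), and this extension is functorial. Thus $\rho$ induces a homomorphism into $\Homeo^+(\mathbb{S}^2)$ whose image preserves $\mathcal{C}$, and it suffices to produce an obstruction at the level of $\mathbb{S}^2$.

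First I would invoke Lemma~\ref{lem_C} for the algebraic input encoded by the order $6$ symmetry: an element $f\in\Mod(X)$ of order exactly $6$ fixing two distinguished ends $a,b\in\mathcal{C}$, together with an element $g\in\Mod(X)$ that \emph{commutes} with $f$ and \emph{interchanges} the ends $a$ and $b$. Because $\rho$ splits $p_X$, it is injective and carries torsion to torsion of the same order, so $F:=\rho(f)$ is a genuine homeomorphism of order $6$, and its extension $\overline{F}\in\Homeo^+(\mathbb{S}^2)$ is again of order $6$. The main leverage now is the classical theorem of Ker\'ekj\'art\'o (used in the infinite-type setting in \cite{Lvzhou}): every nontrivial finite-order orientation-preserving homeomorphism of $\mathbb{S}^2$ is topologically conjugate to a rotation in $\SO(3)$, and in particular has exactly two fixed points. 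Since $f$ fixes the ends $a$ and $b$, the map $\overline{F}$ fixes the points $a,b\in\mathcal{C}\subset\mathbb{S}^2$; having exactly two fixed points, these must be precisely $a$ and $b$. After conjugation we may take $\overline{F}$ to be the rotation by $2\pi/6$ about the axis through $a$ and $b$.

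The contradiction then comes from $g$ together with the rigidity of local rotation numbers. Set $G:=\rho(g)$ and let $\overline{G}$ be its extension. As $f$ and $g$ commute, so do $F$ and $G$, whence $\overline{G}\,\overline{F}\,\overline{G}^{-1}=\overline{F}$; and as $g$ interchanges $a$ and $b$, the orientation-preserving map $\overline{G}$ swaps the two poles, $\overline{G}(a)=b$. Attaching to each fixed point $x$ of $\overline{F}$ its local rotation number $\mathrm{rot}_x(\overline{F})\in\mathbb{Q}/\mathbb{Z}$ — well defined since a finite-order germ at a fixed point is conjugate to a linear rotation, invariant under orientation-preserving conjugacy, and satisfying $\mathrm{rot}_{\overline{G}(x)}(\overline{G}\,\overline{F}\,\overline{G}^{-1})=\mathrm{rot}_x(\overline{F})$ — and applying this at $x=a$ with $\overline{G}\,\overline{F}\,\overline{G}^{-1}=\overline{F}$ yields $\mathrm{rot}_b(\overline{F})=\mathrm{rot}_a(\overline{F})$. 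But for a genuine rotation of the sphere the two poles carry opposite rotation numbers, so $\mathrm{rot}_a(\overline{F})=\tfrac{1}{6}$ while $\mathrm{rot}_b(\overline{F})=-\tfrac{1}{6}$, and these are distinct in $\mathbb{Q}/\mathbb{Z}$ — a contradiction. This is exactly where the order being $6$ (rather than $2$) enters: all that is needed is $2\cdot\tfrac{1}{6}\neq 0$ in $\mathbb{Q}/\mathbb{Z}$.

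I expect the real difficulty to lie not in this dynamical step but in its combinatorial input, namely Lemma~\ref{lem_C}: one must arrange $\mathcal{C}$ as in Figure~\ref{fig1} so that the prescribed $6$-fold symmetry yields a mapping class $f$ of order $6$ fixing two specific ends, and so that some mapping class $g$ simultaneously commutes with $f$ and swaps those ends. The delicate point is that such a pair cannot be realized by spherical isometries — no element of $\SO(3)$ commutes with a rotation and interchanges its poles — so $f$ and $g$ must be manufactured purely at the level of $\Mod(X)$ from the self-similar topology of the arrangement, with the Cantor structure around $a$ and around $b$ matched with the same handedness. A secondary technical matter is to verify carefully the functoriality of the extension to $\mathbb{S}^2$, the well-definedness and orientation-preserving conjugacy-invariance of the local rotation number for finite-order germs, and the uniform handling of the planar and spherical cases.
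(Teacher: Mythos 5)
Your dynamical step is fine (the Freudenthal extension to $\mathbb{S}^2$, Ker\'ekj\'art\'o's theorem, and the fact that commuting with $\overline{F}$ while swapping its two fixed points forces equal local rotation numbers, contradicting $1/6\neq -1/6$ in $\mathbb{Q}/\mathbb{Z}$). But the proof has a fatal gap at its input: you outsource the existence of an order-$6$ class $f\in\Mod(X)$ fixing two ends $a,b\in\mathcal{C}$, together with a class $g$ commuting with $f$ and swapping $a,b$, to Lemma \ref{lem_C} and Figure \ref{fig1}. Lemma \ref{lem_C} contains nothing of the sort --- it only asserts that $\mathbb{R}^2\setminus C'$ is homeomorphic to $\mathbb{R}^2\setminus C$ --- and the order-$6$ symmetry actually visible in Figure \ref{fig1}, the rotation $\alpha_1$, permutes the six discs cyclically and fixes \emph{no} end at all: its fixed points (the center $O$, and also $\infty$ in the sphere case) are regular points of the surface. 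Worse, in the planar case the elements you need cannot exist. The end at infinity of $\mathbb{R}^2\setminus\mathcal{C}$ is isolated in the end space (while no point of $\mathcal{C}$ is), so every mapping class fixes it; by Afton--Calegari--Chen--Lyman a finite-order mapping class is realized by a finite-order homeomorphism, whose extension to $\mathbb{S}^2$ would then fix $\infty$, $a$ and $b$ --- three fixed points --- hence be the identity by Ker\'ekj\'art\'o. So an order-$6$ class of $\mathbb{R}^2\setminus\mathcal{C}$ fixing two ends of $\mathcal{C}$ is necessarily trivial: for $X=\mathbb{R}^2\setminus\mathcal{C}$ your hypothesis is vacuous and the argument proves nothing. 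In the sphere case the existence of such a commuting pair $(f,g)$ is likewise never established (as you note, it cannot come from isometries, and since $\langle f,g\rangle$ need not be finite, no realization theorem hands it to you).

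For comparison, the paper's proof runs in the opposite direction: its finite-order elements fix no ends, and the contradiction is an excess of fixed points rather than a rotation-number mismatch. One takes the subgroup $\Gamma$ generated by the rigid half-twists $\sigma_i$ and $R(2\pi/30)$; assuming a section $\rho$, the unique fixed point $O$ of the order-$6$ element $\rho(\alpha_1)$ is not a puncture, and a centralizer computation (writing $\sigma_3$ as a word in elements commuting with $\alpha_1^2$ and with $\alpha_1^3$, whose realizations must fix $O$) forces every $\rho(\sigma_i)$ to fix $O$. Then the order-$5$ element $\alpha_2=\alpha_1\sigma_5R(2\pi/30)$ has realization fixing both $O$ and the puncture $P_6$ (a fixed end), giving a finite-order homeomorphism of the plane with two fixed points (or of the sphere with three), contradicting Ker\'ekj\'art\'o. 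If you want to salvage your approach, the entire burden is to construct, from Figure \ref{fig1}, mapping classes with the fixed-end and commutation properties you assumed --- and in the planar case this is provably impossible, so some genuinely different mechanism (such as the paper's) is required.
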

Examples of such set $\mathcal{C}$ include the Cantor set and 6 copies of a cardinal set $\omega^{\alpha}+1$. A key consequence of this choice of $\mathcal{C}$ is that the surface $X = \mathbb{R}^2 \setminus \mathcal{C}$ (or  $X = \mathbb{S}^2 \setminus \mathcal{C}$) admits an order 6 symmetry.

To prove the theorem, we use this order 6 mapping class to construct a finitely generated subgroup $\Gamma$ of $\Mod(X)$ that has no realization as homeomorphisms of $X$. More specifically, suppose on the contrary that there exists a section $\rho : \Mod(X) \to \Homeo^+(X)$. Then we will show that the realization of every element in $\Gamma$ has the same set of fixed points which does not contain any point in $\mathcal{C}$. However, there exists an order 5 element in $\Gamma$ whose realization fixes a point in $\mathcal{C}$, which is a contradiction.

We point out that if $X$ is the plane minus a Cantor set, then any countable subgroup of $\Homeo^+(S^1)$ can be embedded into $\Homeo^+(X)$ by Calegari--Chen in the proof of \cite[Theorem 5.1]{CalegariChen}. In particular, any countable subgroup of $\Mod(X)$ can be embedded into $\Homeo^+(X)$. Therefore, the finitely generated subgroup $\Gamma$ that we construct in Section 3 has an embedding into  $\Homeo^+(X)$ but such an embedding is not a realization. In contrast, $\Mod(S_g)$ for a genus $g>2$ surface has no embedding into $\Homeo^+(S_g)$ by Franks--Handel \cite[Corollary 1.3]{FH2}.


\subsection{Organizaiton of the paper} The paper is organized as follows. In Section 2, we give geometric interpretation of the generalized Nielsen realization problem. In Section \ref{sec_Mar}, we give a brief overview of Markovic's minimal decomposition theory. We prove Theorem \ref{main2} and Theorem \ref{genus3} in Section \ref{sec_compact}. Finally, we prove Theorem \ref{cantor} in Section \ref{sec_Cantor}.

\subsection*{Acknowledgement} We would like to thank Danny Calegari for suggesting \cite[Theorem 5.1]{CalegariChen} and Jing Tao for useful comments.

\section{Geometric interpretation}
In this section, $X$ can be any manifold. The main motivation to study the generalized Nielsen realization problem is its connection to $X$-bundles. 


An {\it $X$-bundle} is a fiber bundle $\pi: E \to B$ whose fiber is the manifold $X$ and whose structure group is $\Homeo^+(X)$. In particular, there is an open cover $\{U_i\}$ for the base $B$ such that the bundle is trivial on each $U_i$, i.e. $\pi^{-1}(U_i) = U_i \times X$ and the trivial bundles are glued by transition functions $U_i \cap U_j \to \Homeo^+(X)$.
The {\it universal $X$-bundle} $E\Homeo^+(X)$ is an $X$-bundle whose base is the {\it classifying space} $B\Homeo^+(X)$. The universality of the bundle is given in the following sense. Given any map from a CW-complex $B$ to $B\Homeo^+(X)$, we obtain an $X$-bundle $E \to B$ by pulling back the universal $X$-bundle as shown in the following diagram. 
\begin{center}
\begin{tikzcd}
	E \arrow[r] \arrow[d]
	& E\Homeo^+(X)  \arrow[d ] \\
	B \arrow[r, black]
	& |[black, rotate=0]| B\Homeo^+(X)
\end{tikzcd}
\end{center}

Let $\Homeo^+(X)^{\delta}$ be the discrete group of $\Homeo^+(X)$. Then $\Homeo^+(X)^{\delta}$ also has its classifying space $B\Homeo^+(X)^{\delta}$. We have the following natural forgetful map
\[
\pi_X:  B\Homeo^+(X)^\delta \to B\Homeo^+(X).
\]
Let $\widetilde{B}$ be the universal cover of $B$. We say that an $X$-bundle over $B$ has a {\it topological flat structure} if this bundle is $\widetilde{B}\times X/\pi_1(B)$ where the action is given by deck transformations on $\widetilde{B}$ and a homomorphism $\pi_1(B)\to \Homeo^+(X)$ on $X$. For this reason, whether the universal $X$-bundle has a flat structure corresponds to whether the map $\pi_X$ has a section. However, a section $s$ of $\pi_X$ gives the following homomorphism by taking fundamental groups 
\[
\pi_1(B\Homeo^+(X))=\pi_0(\Homeo^+(X))=\Mod(X) \xrightarrow{s_*} \Homeo^+(X) = \pi_1(B\Homeo^+(X)^{\delta}).
\]
Therefore, if the map $s$ is a section of $\pi_X$ then the map $s_*$ is a section of $\pi_{X*} = p_X$. This means that a negative answer of the generalized Nielsen realization problem for $X$ implies that the universal $X$ bundle has no flat structure. In general, it is hard to compute both the homology and the homotopy groups of $B\Homeo^+(X)$ and $B\Homeo^+(X)^\delta$.

\section{Markovic's minimal decomposition theory} \label{sec_Mar}
In this section, we give a brief overview of Markovic's minimal decomposition theory. We start with the definition of an {\it upper semi-continuous decomposition} of a surface.
\begin{defn}[{\cite[Definition 2.1]{Mar}}]
	Let $M$ be a surface. A collection ${\bf S}$ of closed connected subsets of $M$ is an {\it upper semi-continuous decomposition of $M$} if the following holds:
	\begin{enumerate}
		\item If $S_1, S_2 \in {\bf S}$, then $S_1 \cap S_2 = \emptyset$.
		\item If $S \in {\bf S}$, then $S$ does not separate $M$; that is $M \setminus S$ is a connected set.
		\item We have $M = \cup_{S \in {\bf S}} S$.
		\item If $S_n \in {\bf S}$ for $n \in \mathbb N$, is a sequence that has the Hausdorff limit $S_0$, then there exists $S \in {\bf S}$ such that $S_0 \subset S$.
	\end{enumerate}
\end{defn}

\begin{defn}[{\cite[Definition 2.2]{Mar}}]
	Let $M$ be a surface and let $S \subset M$ be a closed, connected subset of $M$ which does not separate $M$. We say that $S$ is {\it acyclic} if there is a simply connected open set $U \subset M$ such that $S \subset U$ and $U \setminus S$ is homeomorphic to an annulus.
\end{defn}

Let ${\bf S}$ be an upper semi-continuous decomposition of $M$. For every point $p \in M$, there exists a unique $S_p \in {\bf S}$ that contains $p$. We denote by $M_{\bf S}$ the set of all points $p \in M$ such that the corresponding $S_p$ is acyclic.

Now we add some dynamics to an upper semi-continuous decomposition of $M$ by considering a subgroup $G$ of $\Homeo^+(M)$. In particular, we need the following definition for an upper semi-continuous decomposition of $M$ to be compatible with $G$.

\begin{defn}[{\cite[Definition 3.1]{Mar}}]
	Let ${\bf S}$ be an upper semi-continuous decomposition of $M$. Let $G$ be a subgroup of $\Homeo^+(M)$. We say that ${\bf S}$ is {\it admissible for the group $G$} if the following holds:
	\begin{enumerate}
		\item Each $\hat{f} \in G$ preserves setwise every component of ${\bf S}$.
		\item Let $S \in {\bf S}$. Then every point in every frontier component of the surface $M \setminus S$ is a limit of points from $M \setminus S$ that belong to acyclic components of ${\bf S}$ (note that not every point of $S$ need to be in a frontier component of the subsurface $M \setminus S$).
	\end{enumerate}
	If $G$ is the cyclic group generated by a homeomorphism $\hat{f} : M \to M$, we say that ${\bf S}$ is an admissible decomposition for $\hat{f}$.
\end{defn}

An admissible decomposition ${\bf S}$ for a group $G$ will be called the {\it minimal decomposition for $G$} if ${\bf S}$ is contained in every admissible decomposition for $G$.
\begin{thm}[{\cite[Theorem 3.1]{Mar}}]
	Every subgroup $G$ of $\Homeo^+(M)$ has a unique minimal decomposition.
\end{thm}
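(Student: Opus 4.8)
The plan is to reformulate the statement as the existence of a least element in a partially ordered set. I would order the $G$-admissible decompositions of $M$ by refinement, declaring $\mathbf{S} \preceq \mathbf{S}'$ when every element of $\mathbf{S}$ is contained in some element of $\mathbf{S}'$. With this convention, a minimal decomposition in the sense of the definition above is exactly a least element of this poset, namely one with $\mathbf{S} \preceq \mathbf{S}'$ for every admissible $\mathbf{S}'$. Uniqueness is then immediate: refinement is antisymmetric on genuine partitions, so two least elements each refine the other and must coincide. Hence the whole content is the existence of a least element.

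To produce it, I would combine Zorn's lemma with downward directedness, using the elementary fact that in a downward-directed poset any minimal element is automatically the minimum (given minimal $m$ and any $\mathbf{S}'$, directedness yields a common lower bound of $m$ and $\mathbf{S}'$, which by minimality equals $m$, so $m \preceq \mathbf{S}'$). Thus it suffices to prove two things: (i) every descending chain of admissible decompositions has an admissible lower bound, and (ii) any two admissible decompositions admit an admissible common refinement. For (i), given a chain $\{\mathbf{S}_\alpha\}$ I would set, for each $p \in M$, the piece $S_p := \bigcap_\alpha S_p^{\alpha}$, where $S_p^{\alpha}$ is the unique element of $\mathbf{S}_\alpha$ through $p$; along a chain the $S_p^{\alpha}$ form a nested family of continua through $p$, so their intersection is again connected (modulo compactness of pieces, which one must arrange using the surface topology), while disjointness and covering are straightforward. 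For (ii), the candidate refinement has as its pieces the connected components of the intersections $S_1 \cap S_2$ with $S_i \in \mathbf{S}_i$. Granting (i) and (ii), Zorn's lemma produces a minimal admissible decomposition, and directedness from (ii) upgrades it to the least element.

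The real work, and the main obstacle, is verifying that the decompositions built in (i) and (ii) are genuinely admissible, that is, that they satisfy the upper semi-continuity axiom together with admissibility conditions (1) and (2). Upper semi-continuity (axiom (4)) is the technical heart: I would need to show that whenever a sequence of new pieces Hausdorff-converges to some $S_0$, then $S_0$ lies inside a single new piece. For the chain in (i) this should follow from upper semi-continuity of each $\mathbf{S}_\alpha$ together with a diagonal/limit argument, but controlling Hausdorff limits uniformly across the family requires care. The most delicate admissibility point is condition (1): a priori a homeomorphism $\hat f \in G$ preserves each $S_1$ and each $S_2$ setwise, hence preserves $S_1 \cap S_2$, yet it could permute the connected components of $S_1 \cap S_2$, which would violate (1). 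To rule this out I would exploit the surface-specific hypotheses, namely that the pieces do not separate $M$ and the acyclicity structure underlying condition (2), to argue that $\hat f$ cannot exchange distinct components of such an intersection.

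Finally, I would check condition (2), that every point of every frontier component of $M \setminus S$ is a limit of acyclic pieces, for the refined decomposition, tracking how the set $M_{\mathbf{S}}$ of acyclic points behaves under refinement. Since refining a decomposition only shrinks pieces, acyclicity should be inherited or improved, but matching the frontier-limit condition to the new, smaller pieces is a point-set argument that I expect to lean on the same Moore-type continuum theory needed for upper semi-continuity. I expect upper semi-continuity and the $G$-invariance of the intersection components to be the two steps demanding the most care.
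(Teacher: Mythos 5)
First, a point of calibration: the paper does not prove this statement at all --- it is quoted directly from Markovic \cite[Theorem 3.1]{Mar} as imported background, so there is no in-paper argument to compare yours against; the relevant benchmark is Markovic's own proof. That proof constructs the minimal decomposition in one step, by taking for each point $p$ the connected component through $p$ of the intersection of the pieces through $p$ over \emph{all} admissible decompositions, and then verifying the decomposition and admissibility axioms for this single candidate. Your logical scaffolding (refinement order, least element, Zorn's lemma plus downward directedness, antisymmetry for uniqueness) is sound as far as it goes, and your uniqueness argument is correct --- indeed uniqueness is immediate once ``minimal'' is read as ``least element.''

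As a proof of existence, however, the proposal has genuine gaps, and they are exactly the steps you flag and then defer: every piece of mathematical content in this theorem lies in verifying that your two constructions --- the pointwise intersection along a chain, and the component-wise common refinement of two admissible decompositions --- are again upper semi-continuous decompositions admissible for $G$. Concretely: (a) connectedness of $\bigcap_\alpha S_p^\alpha$ requires nested \emph{compact} connected sets, but pieces are only assumed closed, and in this paper the theorem is applied to infinite-type surfaces where pieces may well be unbounded, so your parenthetical ``modulo compactness'' is not a side issue; (b) condition (1) for the refinement is not a routine check --- $\hat f$ preserves $S_1 \cap S_2$ setwise but could a priori permute its connected components, and your stated intention to ``exploit the surface-specific hypotheses'' is a hope, not an argument; note that for general pairs of continua such permutation of intersection components by a homeomorphism certainly can occur, so whatever rules it out must genuinely use the fact that $S_1$ and $S_2$ sit inside full admissible decompositions; (c) upper semi-continuity (axiom (4)) and the frontier/acyclicity condition (2) for both constructions are likewise left unverified. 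A plan that reduces the theorem to precisely the lemmas where all the difficulty lives, without proving them, is an outline of Markovic's task rather than a proof of his theorem; to complete it you would need the continuum-theoretic (Moore-type) arguments that his paper supplies.
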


\section{Realization of compactly supported big mapping class groups} \label{sec_compact}
In this section, we prove Theorem \ref{main2} which states that the projection map
\[
p_S^c:\Homeo^+_c(S)\to \Mod_c(S)
\]
has no sections if the surface $S$ contains a subsurface of genus $3$. 

We first prove a preliminary result, namely Theorem \ref{genus3}, which states that the natural projection map $$p_{S_{3,1}}:\Homeo^+(S_{3,1})\to \Mod(S_{3,1})$$ has no sections. Recall that $\Homeo^+(S_{3,1})$ is the orientation preserving homeomorphism group of genus $3$ fixing a point and $\Mod(S_{3,1})$ is the connected component group of $\Homeo^+(S_{3,1})$. 

\begin{proof}[Proof of Theorem \ref{genus3}]
	Let $x\in S_3$ be the marked point on a genus 3 surface $S_3$. Let $\tau$ be the hyper-elliptic involution of $S_3$ where one of its fixed points is $x$. Let $\SHomeo^+(S_{3,1})$ be the symmetric homeomorphism group; i.e., the group of homeomorphisms of $S_3$ fixing $x$ that commute with $\tau$. Let $\SMod(S_{3,1})$ be the symmetric mapping class group under $\tau$ defined as the image of $\SHomeo(S_{3,1})$ in $\Mod(S_{3,1})$. Then by the Birman--Hilden theory \cite{BH} (see also \cite[Theorem 9.2]{FM}), we know that the symmetric mapping group satisfies the following short exact sequence
	\[
	1\to \langle \tau\rangle\to \SMod(S_{3,1})\to  \Mod(S_{0,7,1})\to 1
	\]
	where $\Mod(S_{0,7,1})$ is the mapping class group of surface of genus $0$ with two sets of marked points where one set contains one point and the other set contains 7 points. If the projection map $p_{S_{3,1}}$ had a realization, say $\rho$, then by Ahlfors’ trick, $\rho(\tau)$ is conjugate to the standard hyper-elliptic involution. It follows that $\tau$ has $8$ fixed points, one of which is $x$. A section of $p_{S_{3,1}}|_{\SMod(S_{3,1})}$ gives a section of the map $p_{S_{0,7,1}} : \Homeo^+(S_{0,7,1})\to \Mod(S_{0,7,1})$. However, this is a contradiction to \cite[Theorem 1.1]{Chen}.
\end{proof}

We now prove Theorem \ref{main2}. We remark that we will use intensely results, definitions and arguments in \cite{Mar}, which will all work if all the elements used are compactly supported. However, if we try to argue for the non-compactly supported case, even the shadowing property of the Anosov element does not work, and seems impossible to repair. 

\begin{proof}[Proof of Theorem \ref{main2}]
	We prove the theorem by contradiction. Suppose that there exists a section
	\[
	\rho_c: \Mod_c(S)\to \Homeo^+_c(S)
	\]
	of $p_S^c$.  Let $T\subset S$ be the subsurface of genus $3$ with one boundary component. Then there is an embedding 
	\[
	i:\Mod(T)\to \Mod_c(S).
	\]
	We will show that $p_S^c$ already does not have sections over $i(\Mod(T))$. Let $c$ be the boundary component of $T$ and let $T_c$ denote the Dehn twist about the curve $c$.
	
	Let $M_c$ be the union of acyclic components of the minimal decomposition for $\rho(T_c)$.
	Then by \cite[Proposition 4.1]{Mar} which works for compactly supported homeomorphisms of infinite type surfaces, we know that $M_c$ contains a subsurface $T'$ of $S$, which contains a subsurface of genus at least $1$. Since $T_c$ is a Dehn twist which commutes with all the Dehn twists about curves disjoint from $c$, we know that the end of $T'$ cannot be any curve other than $c$. On the other hand, $T'$ cannot be the whole surface, because otherwise $\rho_c(T_c)$ is homotopic to identity by \cite[Theorem 2.1]{Mar} (see a more complete argument in \cite[Lemma 5.1]{Mar}). Then we know that $T'$ is homotopic to $T$.
	
	Since $\Mod(T)$ commutes with $T_c$, we know that $\rho_c(\Mod(T))$ preserves $T'$ and is a permutation of all the acyclic components of $\rho_c(T_c)$ inside $T'$. Now we have an action $\rho'$ of $\Mod(T)$ on $T'/\sim$ where two points are equivalent if they belong to the same acyclic set in the minimal decomposition for $\rho_c(T_c)$. Under this action, the element $\rho'(T_c)$ is identity. Thus this gives a realization of the homomorphism $p_{S_{3,1}}:\Homeo^+(S_{3,1})\to \Mod(S_{3,1})$, which does not exist by Theorem \ref{genus3}.
\end{proof}

\section{The proof of Theorem \ref{cantor}} \label{sec_Cantor}
In this section, we prove Theorem \ref{cantor}. We first prove the theorem for $X = \mathbb{R}^2 \setminus \mathcal{C}$ or $X = \mathbb{S}^2 \setminus \mathcal{C}$ where $\mathcal{C}$ is a Cantor set. Then it will be immediate from the proof that the essential property of the set $\mathcal{C}$ which makes the argument work is Lemma \ref{lem_C}. Therefore as long as $\mathcal{C}$ can be arranged as shown in Figure 1 and satisfies Lemma \ref{lem_C}, Theorem \ref{cantor} is true. 

\subsection{The case of a Cantor set}
Let $X$ be the plane minus a Cantor set or the sphere minus a Cantor set. We first construct a specific subgroup of $\Mod(X)$ and the relations in it. Then we show that this special subgroup cannot act as homeomorphisms.

Let $C$ be the standard middle-third Cantor set embedded in the plane $\mathbb R^2$. We construct a set of points $C'$ in the plane as follows. Let $D_1,...,D_6$ be the discs as shown in Figure 1. We denote by $P_1,...,P_6$ the centers of the discs $D_1,...,D_6$ respectively. Inside each disk $D_i$, there is a wedge union $C_i$ of $30$ copies of the standard Cantor set with the center $P_i$ as the wedge point. Let $C'$ be the union of $C_i$. 

\begin{figure}[H]   
\includegraphics[scale=0.22]{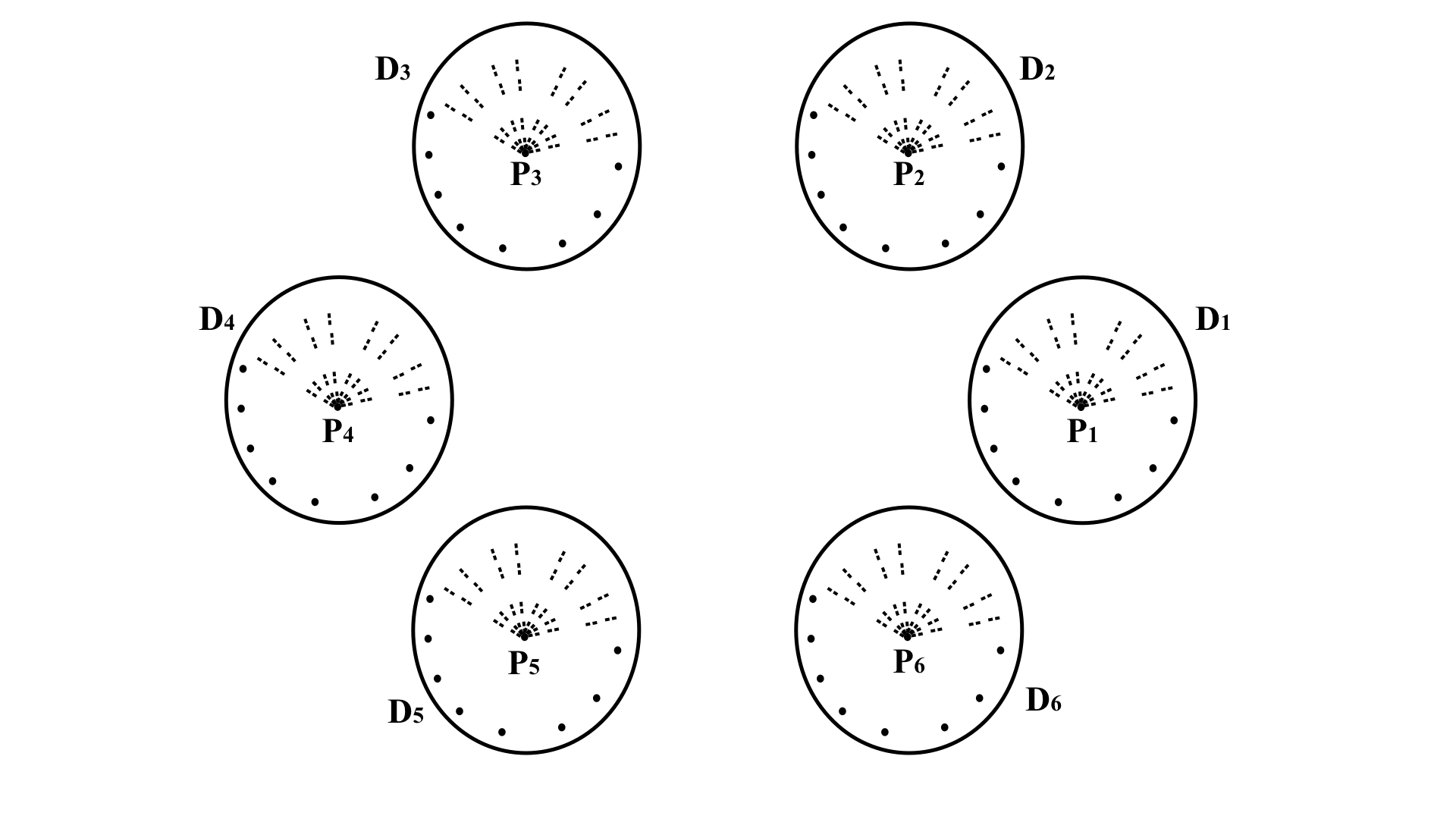}
\label{fig1}
\caption{Configuration of the Cantor set $C'$}
\end{figure}

We have the following lemma.
\begin{lemma}\label{lem_C}
The two surfaces $\mathbb R^2 - C'$ and $\mathbb R^2 - C$ are homeomorphic.
\end{lemma}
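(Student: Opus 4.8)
The plan is to invoke the topological classification of surfaces due to Kerékjártó and Richards: a second countable, boundaryless surface is determined up to homeomorphism by its genus together with the homeomorphism type of the pair consisting of its space of ends and the subspace of ends accumulated by genus. Both $\mathbb{R}^2 \setminus C$ and $\mathbb{R}^2 \setminus C'$ are genus-zero surfaces, being open subsurfaces of the plane, and so have no ends accumulated by genus. Hence it suffices to prove that their spaces of ends are homeomorphic, and the entire argument reduces to understanding these two end spaces.

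First I would identify the end space of the complement of a compact, totally disconnected subset of the plane. For any compact, totally disconnected $K \subset \mathbb{R}^2$, the surface $\mathbb{R}^2 \setminus K$ has one isolated end ``at infinity'', a neighborhood of which is the complement of a large disc containing $K$, together with one end for each point of $K$. A short argument using clopen partitions of $K$ and disjoint enclosing discs shows that these latter ends, with their natural inverse-limit topology, form a space homeomorphic to $K$ itself. Thus $\mathrm{Ends}(\mathbb{R}^2 \setminus K) \cong K \sqcup \{*\}$, where $*$ is an isolated point.

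The remaining task is therefore to show that $C'$ is homeomorphic to the Cantor set, so that
\[
\mathrm{Ends}(\mathbb{R}^2 \setminus C') \cong C' \sqcup \{*\} \cong C \sqcup \{*\} \cong \mathrm{Ends}(\mathbb{R}^2 \setminus C).
\]
By Brouwer's characterization it is enough to check that $C'$ is nonempty, compact, metrizable, totally disconnected, and perfect. I would argue this in two steps: each wedge $C_i$ of finitely many Cantor sets joined at $P_i$ is itself a Cantor set, and then $C' = \bigsqcup_{i=1}^{6} C_i$, being a disjoint union (the discs $D_i$ being disjoint) of finitely many Cantor sets, is again a Cantor set. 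Nonemptiness, compactness, and metrizability are immediate. Total disconnectedness holds because any connected subset of $C_i$ meeting two distinct spokes would have to contain the wedge point $P_i$, which is impossible since each spoke is totally disconnected; hence the connected components are singletons.

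The one place that requires care — the main, if modest, obstacle — is verifying that $C'$ is perfect, i.e. that gluing the spokes at the wedge points creates no isolated point: the point $P_i$ must be shown to be an accumulation point of $C_i$, which holds because $P_i$ lies in each of the perfect Cantor sets forming the spokes and is therefore approached by points within them, while every other point of $C_i$ is already a limit point inside its own spoke. Granting this, $C'$ is a Cantor set, the two end spaces agree, both surfaces are planar of genus zero, and the classification of surfaces yields the desired homeomorphism $\mathbb{R}^2 \setminus C' \cong \mathbb{R}^2 \setminus C$. (The same reasoning applies verbatim on $\mathbb{S}^2$, since there one simply omits the isolated end at infinity from both end spaces.)
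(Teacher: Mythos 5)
Your proof is correct, and while it shares its first step with the paper, it reaches the conclusion by a genuinely different mechanism. Both arguments begin by verifying, via Brouwer's characterization, that $C'$ is itself a Cantor set (you are in fact more careful than the paper here, which declares perfectness and total disconnectedness ``clear by construction''; one small repair to your sketch: a connected set meeting two spokes containing $P_i$ is not \emph{per se} impossible --- the clean finish is that a clopen subset of one spoke avoiding $P_i$ remains clopen in $C_i$, so any two points of $C_i$ are separated by a clopen set). After that the routes diverge. The paper takes the homeomorphism $\varphi : C' \to C$ and extends it to an ambient homeomorphism $\tilde\varphi : \mathbb{R}^2 \to \mathbb{R}^2$, citing ``Schoenflies' theorem'' --- really the classical fact that compact totally disconnected subsets of the plane are all equivalently (tamely) embedded, which is true in dimension $2$ but fails in dimension $3$ (Antoine's necklace) --- and then restricts $\tilde\varphi$ to the complements. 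You instead argue intrinsically: both complements are genus-zero boundaryless surfaces, each has end space homeomorphic to a Cantor set together with one isolated point, and the Ker\'ekj\'art\'o--Richards classification of noncompact surfaces then produces the homeomorphism. The paper's route is shorter and yields slightly more (an ambient homeomorphism of the plane carrying $C'$ onto $C$, not merely a homeomorphism of complements), at the cost of invoking the planar tameness/extension theorem. Your route avoids all questions about how the sets sit in the plane, uses only the standard end-space computation for complements of totally disconnected compacta plus the classification theorem --- the usual toolkit in the big mapping class group literature --- and, as you note, adapts verbatim to $\mathbb{S}^2 \setminus \mathcal{C}$ and to the sets of Section 5.2, where the same two-step schema (identify $\mathcal{C}' \cong \mathcal{C}$, then compare end spaces) applies without change.
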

\begin{proof}
We first show that $C'$ is homeomorphic to the Cantor set $C$. Brower's theorem states that a topological space is homeomorphic to the Cantor set $C$ if and only if it is non-empty, perfect, compact, totally disconnected, and metrizable. By construction, $C'$ is clearly non-empty, perfect, compact, totally disconnected. Moreover, $C'$ is metrizable as it is embedded in $\mathbb R^2$. Therefore, there exists a homeomorphism $\varphi : C' \to C$. By Schoenflie's theorem, $\varphi $ extends to a homeomorphism $\tilde{\varphi} : \mathbb R^2 \to \mathbb R^2$ with $\tilde{\varphi}(C') = C$. It follows that $\mathbb R^2 - C'$ is homeomorphic to $\mathbb R^2 - C$.
\end{proof}

Now we will study the generalized Nielsen realization problem for $\mathbb{R}^2-C'$. Denote by $R(\theta)$ the simultaneous counter-clockwise rotation of each $D_i$ by an angle of $\theta$. To make $R(\theta)$ a homeomorphism of $\mathbb{R}^2-C'$, we need to apply a twist back on an annulus around the disk $D_i$. Around each $D_i$, Figure 2 represents this homeomorphism. Based on our construction of $C'$, we will consider the angle $\theta$ as an integer multiple of $2\pi/30$.

\begin{figure}[H]  
	\includegraphics[scale=0.2]{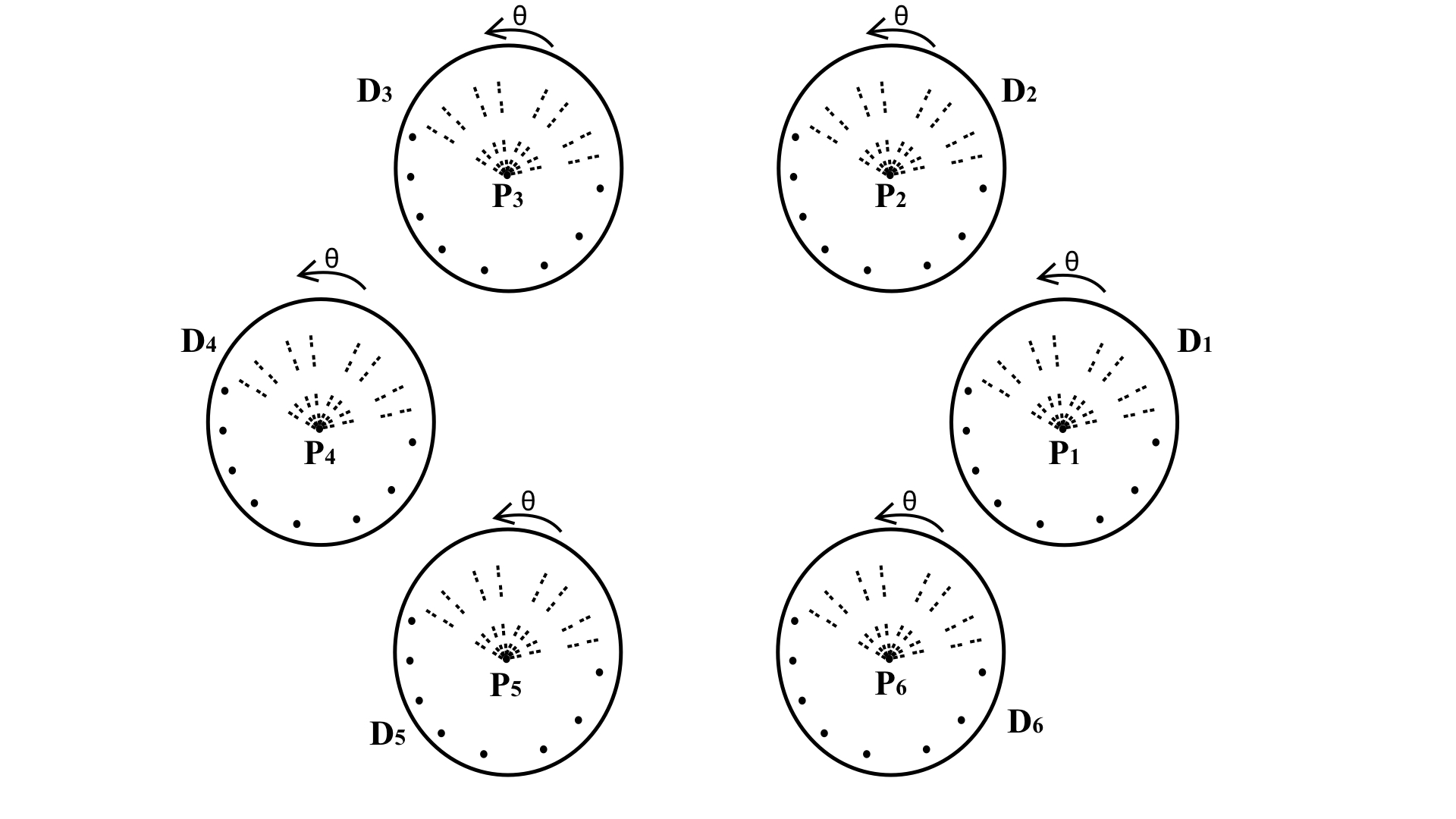}
	\label{fig2}
	\caption{Rotation $R(\theta)$}
\end{figure}

\begin{defn}[Rigid half twist]
Let $\sigma_i$ be the disc pushing between $D_i$ and $D_{i+1}$ clockwise as shown in Figure 3. We call $\sigma_i$ the rigid half twist between $D_i$ and $D_{i+1}$. \end{defn}

Loosely speaking, the map $\sigma_i$ moves the discs around in a specified way while keeping the interior of each disc fixed relative to the center of the disc.

\begin{figure}[H]  
	\includegraphics[scale=0.2]{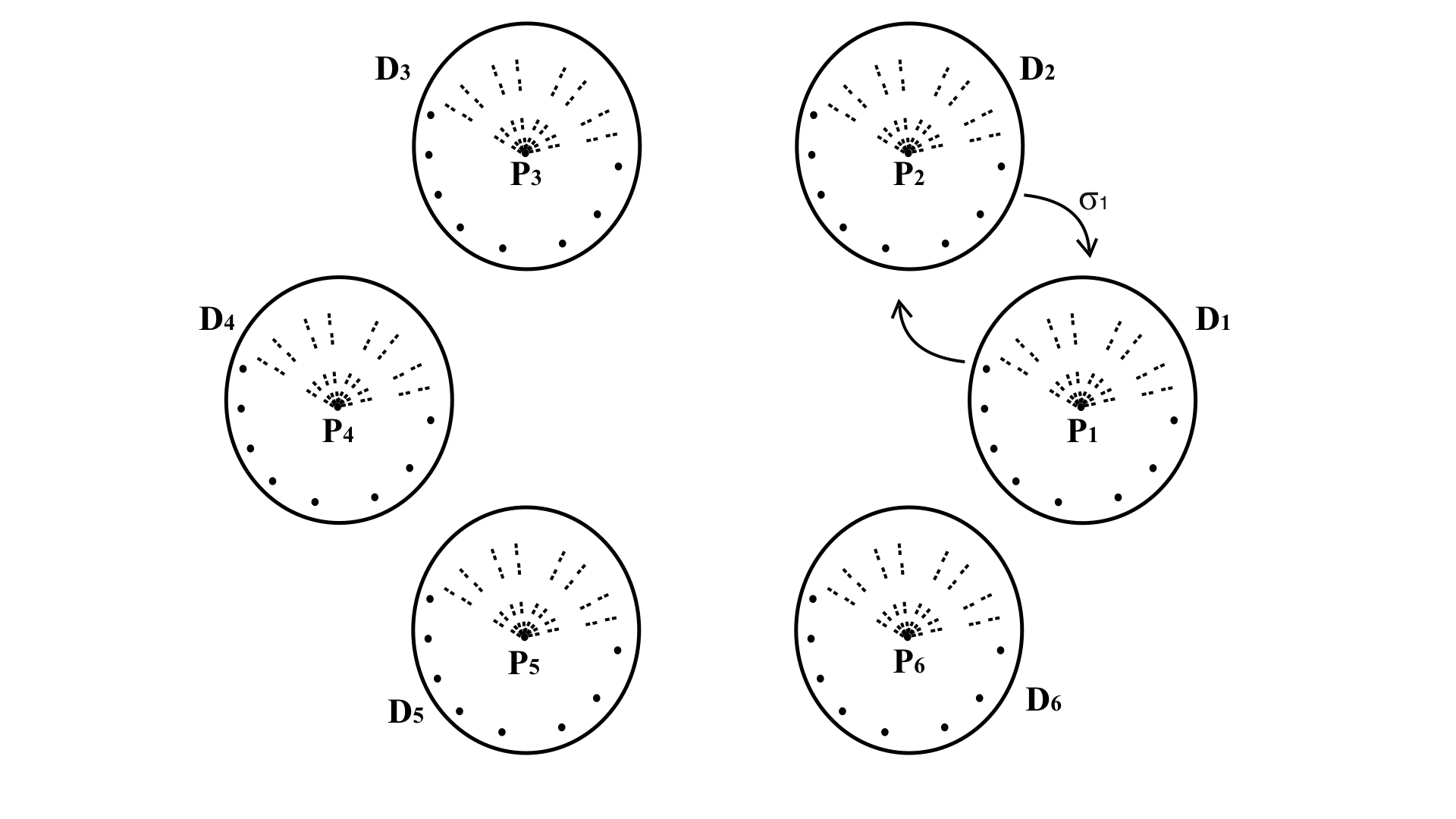}
	\label{fig3}
	\caption{Disk pushing $\sigma_1$}
\end{figure}

Let $\alpha_1$ be the clockwise rotation by $2\pi/6$ of the plane. By symmetry, we see that $\alpha_1$ preserves $C'$ and is thus an element in $\Mod(\mathbb{R}^2-C')$. Moreover, $\alpha_1$ is an order $6$ element.
\begin{defn}[Rigid rotation]
We define {\it rigid rotation}, denoted by $\alpha_1^r$, to be the disc pushing following the path as shown in Figure 4.
\end{defn}

\begin{figure}[H]  
	\includegraphics[scale=0.2]{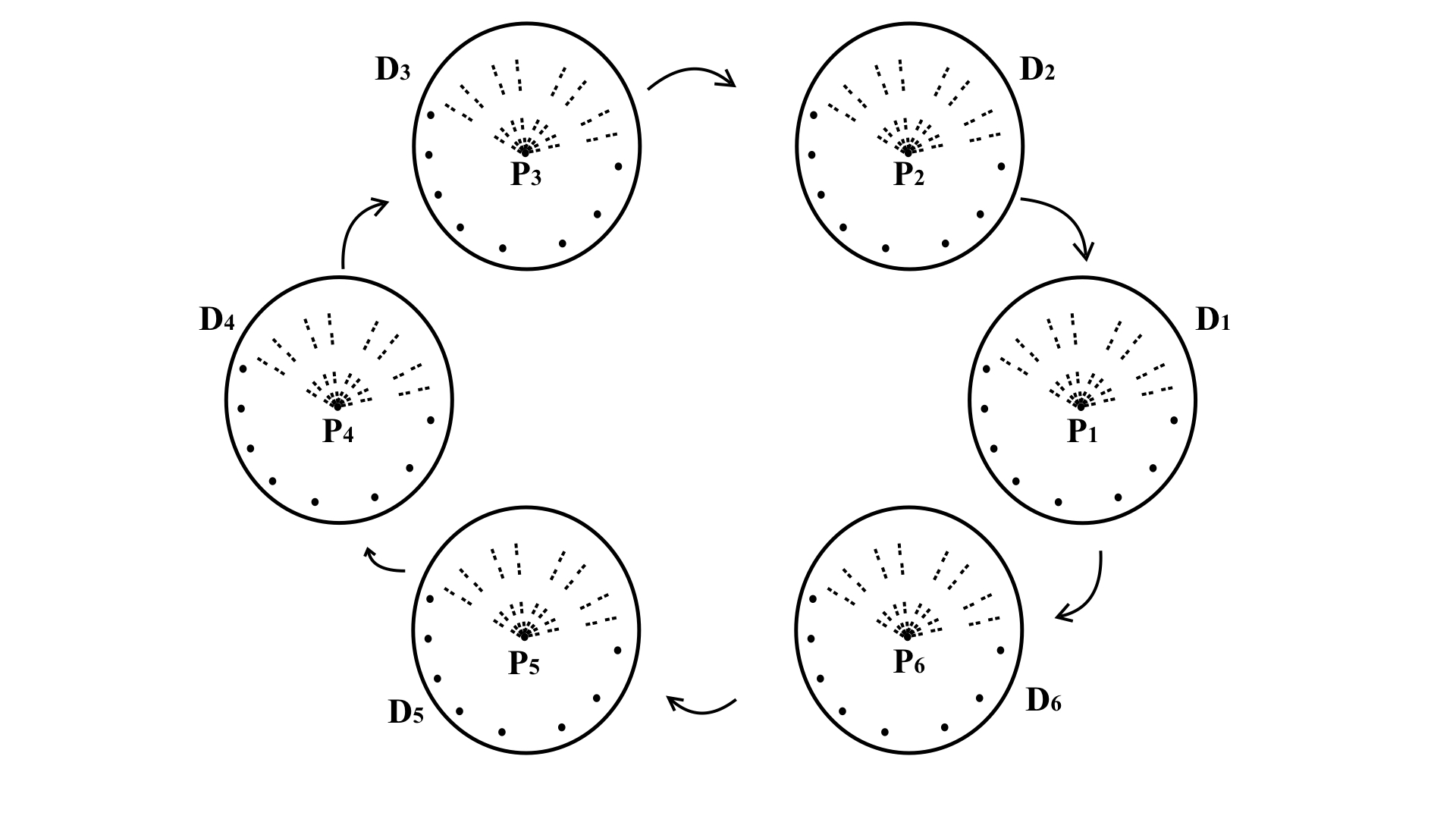}
	\label{fig4}
	\caption{Rigid rotation $\alpha_1^r$}
\end{figure}

Define $\alpha_2:=\alpha_1\sigma_5R(2\pi/30)$. We have the following relations of all the elements we have defined.
\begin{lemma}\label{comp}
We have the following relations:
\begin{enumerate}
\item
$R(2\pi/30)$ commutes with $\sigma_i$,
\item
$\alpha_1(\sigma_i)\alpha_1^{-1} = \sigma_{i+1}$,
\item
$\alpha_1^r=\sigma_1\sigma_2\sigma_3\sigma_4\sigma_5=\sigma_6\sigma_1\sigma_2\sigma_3\sigma_4$,
\item
$\alpha_1 = \alpha_1^r R(2\pi/6)$,
\item
$\alpha_2$ is an order $5$ element.
\end{enumerate}
\end{lemma}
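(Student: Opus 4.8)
The plan is to dispatch the relations (1)--(4) by unwinding the geometric definitions of the maps involved, and then to deduce the order--$5$ relation (5) as a purely algebraic consequence of them.

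First I would prove (1) by a support argument: the rigid half twist $\sigma_i$ carries the interior of each disc rigidly, i.e.\ it keeps the internal wedge configuration fixed relative to the center $P_j$, whereas $R(2\pi/30)$ acts only by rotating these interiors. Choosing representatives for which the disc--pushing of $\sigma_i$ and the interior rotation of $R(2\pi/30)$ are performed on independent coordinates, the two homeomorphisms commute, giving (1). Relation (2) is an equivariance statement: $\alpha_1$ is the genuine clockwise rotation by $2\pi/6$, which carries the pair of discs $(D_i,D_{i+1})$ together with the pushing region defining $\sigma_i$ onto the pair $(D_{i+1},D_{i+2})$ and the region defining $\sigma_{i+1}$. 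Conjugating a homeomorphism by one that maps its defining picture to another yields the homeomorphism built from the image picture, so $\alpha_1\sigma_i\alpha_1^{-1}=\sigma_{i+1}$. Relation (4) I would prove by comparing $\alpha_1$ with $\alpha_1^r$: the rigid rotation $\alpha_1^r$ moves each disc to the position of the next, but, being a disc push, leaves each interior wedge fixed relative to its center, whereas the genuine rotation $\alpha_1$ additionally turns each interior by $2\pi/6$. Since $2\pi/6=5\cdot(2\pi/30)$, composing $\alpha_1^r$ with the interior rotation $R(2\pi/6)=R(2\pi/30)^5$ recovers $\alpha_1$, and the two factors commute by the same support argument as in (1).

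Relation (3) is the geometrically most delicate point, and I expect it to be the main obstacle. Here I would track the disc--pushing isotopy defining $\alpha_1^r$ and factor the cyclic push of the six discs into five successive adjacent half twists, exactly as one writes a cyclic rotation of marked points in a disc as the braid $\sigma_1\sigma_2\sigma_3\sigma_4\sigma_5$. The care needed is to check that the compensating annular twists that are bookkept in the definitions of $R(\theta)$ and of the $\sigma_i$ do not leave a residual boundary Dehn twist, so that the factorization is an honest equality of mapping classes rather than merely an equality up to twisting; the same consistency check underlies the exact form of (4). Because the whole configuration is cyclically symmetric under $\alpha_1$, the factorization may be started at any disc, and starting it at $D_6$ rather than $D_1$ produces the second expression $\sigma_6\sigma_1\sigma_2\sigma_3\sigma_4$; alternatively the second expression follows from the first by conjugating with $\alpha_1$ and using (2).

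Finally, (5) follows by a short computation once (1)--(4) are in hand. Writing $R=R(2\pi/30)$ and using (1) together with (4), the element $R$ commutes with every $\sigma_i$, with $\alpha_1^r$, and hence with $\alpha_1$, so all five copies of $R$ in $\alpha_2^5$ may be pulled to the right to give $\alpha_2^5=(\alpha_1\sigma_5)^5\,R^5$. Repeatedly applying (2) in the form $\sigma_5\alpha_1=\alpha_1\sigma_4$ to move each $\alpha_1$ to the left collapses the word $(\alpha_1\sigma_5)^5$ to $\alpha_1^5\,\sigma_1\sigma_2\sigma_3\sigma_4\sigma_5=\alpha_1^5\alpha_1^r$ by (3); then (4) gives $\alpha_1^r=\alpha_1 R^{-5}$, and since $\alpha_1$ has order $6$,
\[
\alpha_2^5=(\alpha_1\sigma_5)^5\,R^5=\alpha_1^5\,\alpha_1^r\,R^5=\alpha_1^5\,(\alpha_1 R^{-5})\,R^5=\alpha_1^6=1 .
\]
To see that the order is exactly $5$, I would note that $\alpha_2=\alpha_1\sigma_5 R$ induces on the set of discs the permutation obtained from the $6$--cycle of $\alpha_1$ and the transposition of $\sigma_5$, namely the $5$--cycle $(1\,2\,3\,4\,5)$ fixing $D_6$; this is nontrivial, so $\alpha_2$ has order precisely $5$. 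The fixed disc $D_6$ is exactly what will later force the realization of $\alpha_2$ to fix a point of $C'$.
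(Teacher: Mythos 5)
Your proposal is correct and follows essentially the same route as the paper: the same support/rigidity argument for (1), the definitional/equivariance observations for (2) and (4), the braid-group factorization of the cyclic disc push into adjacent half twists for (3), and the same word-shuffling computation for (5), merely collecting the $\alpha_1$'s on the left (using $\sigma_1\sigma_2\sigma_3\sigma_4\sigma_5$) where the paper collects them on the right (using $\sigma_6\sigma_1\sigma_2\sigma_3\sigma_4$). Your two small additions--deriving that $R(2\pi/30)$ commutes with $\alpha_1$ from (1), (3), (4), and checking via the induced $5$-cycle on the discs that the order is exactly $5$ rather than $1$--are points the paper leaves implicit.
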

\begin{proof}
\begin{enumerate}
\item
The element $\sigma_i$ preserves the support of $R(2\pi/30)$. Outside the support of $R(2\pi/30)$, they commute because $R(2\pi/30)$ is the identity. Inside the support of $R(2\pi/30)$, they commute because $\sigma_i$ is rigid.
\item
This one is given by definition.
\item
This is a consequence of the same result in the braid group. The point-pushing version is given by the following fiber bundle
\[
\Homeo(\mathbb{R}^2,6 pts)\to \Homeo(\mathbb{R}^2)\to \Conf_6(\mathbb{R}^2)
\]
where $\Homeo(\mathbb{R}^2,6pts)$ is the set of homeomorphisms of $\mathbb{R}^2$ fixing $6$ points as a set and $\Conf_6(\mathbb{R}^2)$ is the configuration space of $6$ distinct unordered points in $\mathbb R^2$.

Taking the long exact sequence of homotopy groups, we obtain a map 
\[
\pi_1(\Conf_6(\mathbb{R}^2))\to \pi_0(\Homeo(\mathbb{R}^2,6pts)).
\]
Figure 5 shows the path for the point-pushing of $\sigma_1\sigma_2...\sigma_5$, from which we can see that the two paths for the point-pushing of $\alpha_1^r$ and $\sigma_1\sigma_2...\sigma_5$ are homotopic to each other.
\begin{center}
\begin{figure}[h!]  
	\includegraphics[scale=0.17]{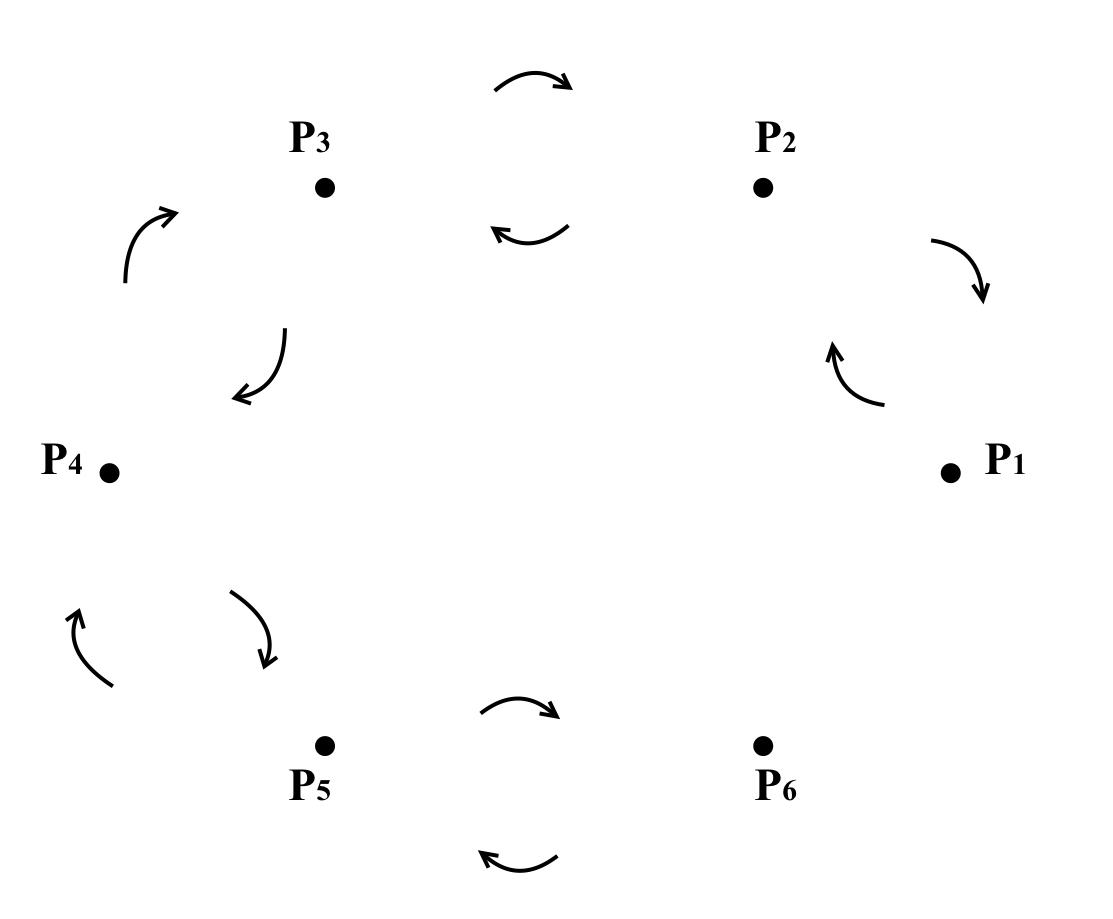}
	\label{fig5}
	\caption{The path of $\sigma_1\sigma_2...\sigma_5$}
\end{figure}
\end{center}

In our case, we are dealing with disc-pushing but since all of our mapping classes are rigid, we obtain the same result.
\item
This is given by the definition of $\alpha_1^r$.
\item
Let us compute $\alpha_2^5$.
\begin{equation*}
  \begin{aligned}
    \alpha_2^5  & = \alpha_1\sigma_5R(2\pi/30)\alpha_1\sigma_5R(2\pi/30)\alpha_1\sigma_5R(2\pi/30)\alpha_1\sigma_5R(2\pi/30)\alpha_1\sigma_5R(2\pi/30) \\
                & = \alpha_1\sigma_5\alpha_1\sigma_5\alpha_1\sigma_5\alpha_1\sigma_5\alpha_1\sigma_5R(2\pi/6)\\
                & = \sigma_6\alpha_1^2\sigma_5\alpha_1\sigma_5\alpha_1\sigma_5\alpha_1\sigma_5R(2\pi/6)\\
                & = \sigma_6\sigma_1\alpha_1^3\sigma_5\alpha_1\sigma_5\alpha_1\sigma_5R(2\pi/6)\\
                &=...=\sigma_6\sigma_1\sigma_2\sigma_3\sigma_4\alpha_1^5R(2\pi/6)\\
                & = \alpha_1^r\alpha_1^{-1}R(2\pi/6)\\
                & = R(-2\pi/6)R(2\pi/6) = 1
  \end{aligned}
\end{equation*}
Therefore $\rho(\alpha_2)$ is an order $5$ element
\end{enumerate}

\end{proof}
Let $\Gamma$ be the subgroup of $\Mod(\mathbb{R}^2-C')$ generated by $\sigma_i$ and $R(2\pi/30)$. We will prove the following proposition which implies Theorem \ref{cantor}.
\begin{prop}
The group $\Gamma$ has no section under $p_X$.
\end{prop}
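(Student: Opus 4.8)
The plan is to derive a contradiction from the existence of a section $\rho : \Gamma \to \Homeo^+(X)$ by analyzing the fixed point sets of the realized homeomorphisms. The overarching strategy, as flagged in the introduction, is to show that $\rho(g)$ shares a common set of fixed points for all $g \in \Gamma$, with this common fixed set avoiding $\mathcal C = C'$, and then to exhibit the order $5$ element $\alpha_2 = \alpha_1 \sigma_5 R(2\pi/30)$ whose realization must fix a point of $C'$, contradicting the disjointness. First I would use Lemma \ref{comp}(1), namely that $R(2\pi/30)$ commutes with every $\sigma_i$, together with the fact that $R(2\pi/30)$ has finite order (it is the rotation by $2\pi/30$ on each disc, hence order $30$). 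Since $\rho$ is a section, $\rho(R(2\pi/30))$ is a finite-order orientation-preserving homeomorphism of $X$; by the standard theory of periodic surface homeomorphisms (or by appealing to the structure of its fixed set), $\rho(R(2\pi/30))$ has a well-understood fixed point set, and its $\rho(\sigma_i)$-invariance will pin down a common invariant structure.

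The key steps, in order, are as follows. First I would establish that the realization $\rho(R(2\pi/30))$ is conjugate to a genuine rotation-type map near each center $P_i$, using the Ahlfors-type or periodic-map normalization already invoked in the proof of Theorem \ref{genus3}; the centers $P_i$ lie in $C'$ as wedge points, so I must be careful to track whether the $P_i$ themselves are removed. Second, because $R(2\pi/30)$ commutes with each generator $\sigma_i$ of $\Gamma$ other than itself, the fixed set $\Fix(\rho(R(2\pi/30)))$ is preserved setwise by every $\rho(\sigma_i)$, and hence by all of $\rho(\Gamma)$; this is where the common fixed-set claim originates. Third, I would analyze $\rho(\alpha_2)$: by Lemma \ref{comp}(5) it is an order $5$ element in $\Gamma$, and since $\alpha_2$ expressed via $\alpha_1 = \alpha_1^r R(2\pi/6)$ is built from the rigid rotation, its realization as a finite-order homeomorphism must have a fixed point, and by the rotational symmetry this fixed point is forced to sit at a center, i.e. in $C'$. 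The contradiction is that $C'$ is removed from $X$, so no homeomorphism of $X$ can fix a point of $C'$, yet the global fixed-set analysis forces exactly such a fixed point into the common invariant set.

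The main obstacle I expect is controlling the fixed point set of $\rho(R(2\pi/30))$ precisely enough to locate a fixed point at a center $P_i \in C'$ rather than merely somewhere in $X$. Finite-order homeomorphisms of a planar surface minus a Cantor set need not have the clean Nielsen-realization normal form available in the closed finite-type case, so I cannot simply invoke Kerckhoff or Ahlfors verbatim; instead I would need to argue via the induced action on the space of ends of $X$ and on the $6$-fold symmetric configuration of the discs $D_i$. Concretely, the order $6$ symmetry $\alpha_1$ permutes the six discs cyclically, so $\rho(\alpha_1^6) = \mathrm{id}$ forces $\rho(\alpha_1)$ to have a fixed end structure whose only candidate center of symmetry is the common center of the configuration; combined with the order $5$ relation, Brouwer-type fixed point arguments on the relevant invariant disc should produce the fixed point in $C'$. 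The delicate point is ruling out that the realization escapes to an interior fixed point in $X \setminus C'$, and I would resolve this by using the precise combinatorial arrangement of Figure \ref{fig1} guaranteed by Lemma \ref{lem_C}, which is exactly the hypothesis the theorem isolates as essential.
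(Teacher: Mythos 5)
Your high-level outline (a common fixed point for all of $\rho(\Gamma)$ avoiding the removed set, played against the order~$5$ element $\alpha_2$) matches the strategy announced in the paper's introduction, but the execution has genuine gaps, the first of which destroys your anchor step. You assert that $R(2\pi/30)$ has finite order in $\Mod(X)$. It does not: $R(2\pi/30)^{30}=R(2\pi)$ is not the trivial mapping class but the product of six Dehn twists about the curves encircling the discs $D_i$ (inside each disc the full rotation is the identity, and what survives is a full twist on each surrounding annulus); these curves are essential since each $D_i$ contains infinitely many ends, so $R(2\pi/30)$ has \emph{infinite} order. Consequently $\rho(R(2\pi/30))$ need not be periodic, its fixed set has no controlled structure, and the ``common invariant structure'' you propose to extract from $\Fix(\rho(R(2\pi/30)))$ is unavailable. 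The paper instead anchors everything on $\alpha_1$, which genuinely has order~$6$: its realization extends to a finite-order homeomorphism of the plane, hence (Ker\'ekj\'art\'o) is conjugate to a rotation with a unique fixed point $O$, and $O$ is not a puncture because $\alpha_1$ permutes the six pieces of $C'$ cyclically and so fixes no end. A singleton invariant set is a fixed point, which is what converts commutation into actual fixing of $O$.

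The second gap is that even with a good periodic element in hand, commutation only yields setwise invariance, and the generators $\sigma_i$ do \emph{not} commute with $\alpha_1$ (conjugation by $\alpha_1$ shifts indices, Lemma~\ref{comp}(2)). This is precisely the difficulty the paper's proof is built to overcome: it shows $\sigma_3$ lies in the group $G$ generated by the centralizers $C(\alpha_1^2)$ and $C(\alpha_1^3)$ via an explicit word manipulation, starting from $\sigma_1\sigma_4,\sigma_2\sigma_5,\sigma_3\sigma_6\in C(\alpha_1^3)$, $\sigma_1\sigma_3\sigma_5,\sigma_2\sigma_4\sigma_6\in C(\alpha_1^2)$ and $\alpha_1^r=\sigma_1\sigma_2\sigma_3\sigma_4\sigma_5$; since $\rho(\alpha_1^2)$ and $\rho(\alpha_1^3)$ are periodic with the same unique fixed point $O$, everything in $\rho(G)$ fixes $O$, hence so does $\rho(\alpha_2)$. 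Your proposal contains no counterpart of this computation, which is the technical heart of the proof. Finally, your closing contradiction is not a contradiction: a homeomorphism of $X$ extends to the end compactification and may perfectly well ``fix a point of $C'$'' in that extension while having no fixed point in $X$ at all (an order-$2$ rotation of the once-punctured plane about its puncture is the model case), and $\rho(\alpha_2)$ fixes the end $P_6$ not by any symmetry-forcing argument but simply because the mapping class $\alpha_2$ fixes that end. The real contradiction is that $\rho(\alpha_2)$ fixes \emph{both} the interior point $O$ and the puncture $P_6$ --- two distinct fixed points for a nontrivial finite-order orientation-preserving homeomorphism of the plane, violating uniqueness. Your stated aim of ``ruling out that the realization escapes to an interior fixed point in $X \setminus \mathcal{C}$'' has the logic backwards: the interior fixed point is exactly what the argument must produce.
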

\begin{proof}
Suppose that $\Gamma$  has a section $\rho$ of $p_X$. Then $\rho(\alpha_1)$ is an order $6$ homeomorphism of $\mathbb{R}^2$, which has a unique fixed point $O$. By the construction of $\alpha_1$, we know that $O$ is not a puncture.

Now we will show that $\{\rho(\sigma_i)\}$ and $\rho(R(2\pi/30))$ both fix $O$. Since $R(2\pi/30)$ commutes with $\alpha_1$, we know that $\rho(R(2\pi/30))$ fixes $O$. The reason that $\{\rho(\sigma_i)\}$ fixes $O$ follows from the same argument as in \cite{Chen}. We include it here for completeness.

We prove this by explicitly writing $\sigma_3$ as a product of elements in $C(\alpha_1^2)$ and $C(\alpha_1^3)$. Here $C(\alpha_1^j)$ is the set of elements in $\Mod(X)$ which commute with $\alpha_1^j$ for $j = 2, 3$.
Let $G$ be the group generated by $C(\alpha_1^2)$ and $C(\alpha_1^3)$. Since $\rho(C(\alpha_1^3))$ and $\rho(C(\alpha_1^2))$ both fix $O$, every element of $\rho(G)$ fixes $O$. We first observe that $\sigma_1\sigma_4,\sigma_2\sigma_5, \sigma_3\sigma_6 \in C(\alpha_1^3)$ and $\sigma_1\sigma_3\sigma_5,\sigma_2\sigma_4\sigma_6\in C(\alpha_1^2)$. Now, we start with 
\[\alpha_1^r=\sigma_1\sigma_2\sigma_3\sigma_4\sigma_5 \in G.\]
Since $\sigma_5\sigma_2\in G$, we have that 
\[
\sigma_1\sigma_2\sigma_3\sigma_4\sigma_5(\sigma_5\sigma_2)^{-1} \in G.\]
By commutativity of $\sigma_2$ and $\sigma_4$, we obtain
\[
\sigma_1\sigma_2\sigma_3\sigma_2^{-1}\sigma_4 \in G.\]
Applying the same calculation for $\sigma_1\sigma_4\in G$, we obtain
\[
\sigma_1\sigma_2\sigma_3\sigma_2^{-1}\sigma_1^{-1}\in G.
\]
Since $\sigma_1\sigma_3\sigma_5\in G$, we obtain
\[
(\sigma_1\sigma_3\sigma_5)^{-1}\sigma_1\sigma_2\sigma_3\sigma_2^{-1}\sigma_1^{-1}(\sigma_1\sigma_3\sigma_5)\in G.
\]
But we know that $\sigma_5$ commutes with every other element in the above equation, so we obtain
\[
\sigma_3^{-1}\sigma_2\sigma_3\sigma_2^{-1}\sigma_3\in G.\]
Since $\sigma_3\sigma_6\in G$, we obtain
\[
(\sigma_3\sigma_6)\sigma_3^{-1}\sigma_2\sigma_3\sigma_2^{-1}\sigma_3(\sigma_3\sigma_6)^{-1}\in G.\]
But we know that $\sigma_6$ commutes with every other element in the above equation, so we obtain
\[\sigma_2\sigma_3\sigma_2^{-1}\in G.\]
Since $\sigma_2\sigma_5\in G$, we obtain
\[(\sigma_2\sigma_5)^{-1}\sigma_2\sigma_3\sigma_2^{-1}(\sigma_2\sigma_5)\in G.\]
But we know that $\sigma_5$ commutes with every other element in the above equation, so we obtain
\[\sigma_3\in G.\] 
By symmetry, we know that $\sigma_i\in G$. Since elements of $\rho(G)$ fixes $O$, we know that $\rho(\alpha_2)$ also fixes $O$, where $\alpha_2 := \alpha_1\sigma_5R(2\pi/30)$. However, by definition of $\alpha_2$, the element $\rho(\alpha_2)$ also fixes a puncture $P_6$, the center of $D_6$. This contradicts the fact that $\rho(\alpha_2)$ is a finite order homeomorphism of $\mathbb{R}^2$, which has a unique fixed point.

For the sphere minus a Cantor set case, we construct the same group as before. Now $\rho(\alpha_1)$ has two fixed points $N$ and $S$ and we use the same argument to show that $\rho(G)$ will fix $\{N,S\}$ as a set. However, as an order $5$ element, $\rho(\alpha_2)$ has to fix $\{N,S\}$ individually. This contradicts the fact that $\rho(\alpha_2)$ also fixes a puncture (which is the center of a disc) as in the case of the plane minus a Cantor set.
\end{proof}

\subsection{Other cases}
We begin with an example where $\mathcal{C}$ is the union of 6 copies of the set $A$ of cardinal $\omega^{\alpha}+1$. For each copy, we wedge 30 copies of $A$ at the largest accumulation point $P_i$ and make them contained in the disc $D_i, i=1,...,6$, similar to the Cantor set in Figure 1. We denote by $\mathcal{C}'$ this particular configuration of these 180 copies of $A$. Then we see that Lemma \ref{lem_C} holds; namely, $\mathbb R \setminus \mathcal{C}$ is homeomorphic to $\mathbb R \setminus \mathcal{C}'$. This is because there exists a homeomorphism $\varphi : \mathcal{C}' \to \mathcal{C}$ which extends by Schoenflie's theorem, to a homeomorphism $\tilde{\varphi} : \mathbb R^2 \to \mathbb R^2$ with $\tilde{\varphi}(\mathcal{C} ') = \mathcal{C}$. From here, the rest of the argument goes through. 

Hence, if we can arrange the set $\mathcal{C}$ into the configuration as shown in Figure 1 so that Lemma \ref{lem_C} holds, then Theorem \ref{cantor} holds.

    	\bibliography{braidlifting}{}
	\bibliographystyle{alpha}

\end{document}